\newcounter{parentnumber}
\theoremstyle{plain}
\newtheorem{theorem}{Theorem}
\theoremstyle{lemma}
\newtheorem{lemma}{Lemma}
\theoremstyle{proposition}
\theoremstyle{corollary}
\newtheorem{corollary}{Corollary}
\theoremstyle{definition}
\newtheorem{definition}{Definition}
\theoremstyle{remark}
\newtheorem{remark}[theorem]{Remark}
 \title{   Almost every $n$-vertex graph is determined by its    $3 \log_2{n}$-vertex subgraphs \small{}  }
\author{Ameneh Farhadian \\
\\
\small Sharif University of Technology\\[-0.8ex]
\small   Tehran, I. R. Iran\\
\small\tt afarhadian@yahoo.com\\
\small\tt October 6, 2018\\ }
\date{ \small Mathematics Subject Classifications:  05C60}
\begin{document}

\maketitle

\begin{abstract}
The paper shows that almost every $n$-vertex graph is such that the multiset of its induced subgraphs on  $3 \log_2{n}$ vertices is sufficient to  determine  it up to isomorphism. 
Therefore,  for checking the isomorphism of a pair of $n$-vertex graphs, almost surely  the multiset of their $3 \log_2{n}$-vertex subgraphs is sufficient .

 \bigskip\noindent
 \textbf{Keywords:}
 graph isomorphism;  graph automorphism; graph reconstruction ; graph asymmetry;  subgraph; unique subgraph.
\end{abstract}

\section {Introduction}

We consider finite undirected graphs without loops and multiple edges. The set of vertices and the set of
edges of a graph $G$ are denoted by $V(G)$ and $E(G)$, respectively. The number of vertices of a graph $G$ is called its order. The neighbors of a vertex $v \in V(G)$  is denoted by  $N(v)$.

A bijection $f: V(G) \to  V(H)$ is called isomorphism from graph $G$ to graph $H$,  if $\lbrace x,y\rbrace \in E(G)$ if and only if $ \lbrace f(x),f(y)  \rbrace \in E(H)$. If graph $G$ is isomorphic to graph $H$, we denote it by $G \cong H$. An isomorphism $ f: V(G)  \to V(G)$ is called automorphism of
$G$, denoted by Aut($G$). A graph $G$  with trivial automorphism is called asymmetric, i.e. Aut($G$)=$I$.
We call the subset $S \subset V (G)$ is invariant under $Aut(G)$, if  $\theta(S)=S$ for any $\theta \in Aut(G) $.
A subgraph $S$ of a graph $G$ is a graph whose set of vertices and set of edges are all subsets of $G$. An induced subgraph is a subgraph which includes all possible edges. In this paper, any subgraph is an  induced subgraph.
 If $H$ is a subgraph of a graph $G$, $G \backslash V(H)$ is the induced subgraph on $V(G)-V(H)$.\\
In this paper, the set of all possible graphs on $n$ vertices is denoted by $\Gamma_n $. Assuming $G$ and $F$ are two arbitrary graphs, we define the multiplicity of  graph $F$ in graph $G$, denoted by $C_G(F)$, as the number of induced subgraphs of $G$  isomorphic with $F$. For instance, $C_G(K_2)$ denotes the number of edges of graph $G$.

Let $G$ be a graph with $n$ vertices and $S_m(G)$ be the multiset of all  $m$-vertex subgraphs of $G$. We call graph $G$ can be determined (or reconstructed), uniquely,  from $S_m(G)$, if $S_m(G)$ is unique among all graphs with $n$ vertices. That is, if for a graph $G'$ we have $S_m(G) = S_m(G')$, then $G \cong G'$.\\
%

 In the probability space of graphs on $n$ labeled vertices in which the edges are chosen independently, with probability $p=1/2$, we say that almost every graph $G$ has a property $Q$ if the probability that $G$ has $Q$ tends to 1 as $n \rightarrow \infty$.
We say that an assertion  holds almost surely, if the probability that it holds tends to 1 as $n$ goes to infinity.\\

 In \cite{MR0441789}, it is shown that almost every graph can be reconstructed uniquely from its $(n/2)$-vertex subgraphs. Here, this result has been improved  by showing that the subgraphs with $3 \log_2{n}$ vertices are sufficient.\\

In the following, the anchor of graph and shadow are defined. These concepts are defined in \cite{farhadian2016reconstruction, farhadian2017simple} to explain the reconstruction of a graph from its vertex-deleted subgraphs.

\begin{definition}
A proper induced subgraph $H$ of a graph $G$ is an anchor, if it occurs exactly once in graph $G$. \\
The shadow of a vertex $v \in V(G)-V(H)$ on the subgraph $H$ is  $N_v \cap V(H)$. We denote it by $s_{v,H}$ (or $s_v$ for simplicity, if $H$ is a
specified subgraph).\\
We call an anchor $H$ is  stable, if \textit{i)} the shadow of any vertex of $V(G) - V(H)$ on $H$ is invariant under Aut($H$) and \textit{ii)} there is no pair of vertices in $V(G)-V(H)$ whose shadows on $H$ are the same.

\end{definition}
Please note that the definition of  stable set is very close to the identification code which has been defined in \cite{MR1607726} and studied in \cite{MR2292538, MR2304730}. In definition of identification code $C$, the structure of the induced graph on $C$ is not considered. But, for a stable anchor $H$ the automorphism of $H$ is considered. In both of them, the vertices are identified by their neighborhood on $C$ or $H$.

\section{The main result }
As the main result of this paper, we show that the multiset of all induced subgraphs with $3 \log{n}$ vertices are  sufficient to determine the isomorphism class of almost every $n$-vertex graph. It means that for a randomly chosen graph $G$, this multiset is almost surely  unique among all graphs with $n$ vertices, i.e. there is no other graph $G'$  with the same multiset of subgraphs on $3 \log_2{n}$ vertices. Please note that in a multiset, the multiplicity of the members is taken into account.

\begin{theorem}\label{t1}
 Almost every $n$-vertex graph is such that the multiset of its  induced subgraphs with $3 \log_2{n}$ vertices is sufficient to determine it up to isomorphism.

In other words, the set of $\lbrace (F,C_G(F))\rbrace_{F \in \Gamma_{3\log_2{n}}} $ determines  almost every $n$-vertex graph $G$ up to isomorphism.

\end{theorem}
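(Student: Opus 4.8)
The plan is to combine a probabilistic existence result with a deterministic reconstruction scheme built around the notion of a stable anchor. Write $m = 3\log_2 n$ and fix $t = m-2$. The two halves of the argument are: (A) almost every $G$ possesses a stable anchor $H$ with $|V(H)| = t$; and (B) whenever $G$ has such a stable anchor, the multiset $S_m(G)$ determines $G$ up to isomorphism. Part (B) is where the size $m = 3\log_2 n$ is spent: a stable anchor occupies $t = m-2$ vertices, leaving room inside an $m$-vertex subgraph for exactly two further vertices, which is precisely what is needed to read off adjacencies between pairs of vertices lying outside the anchor.

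For part (A), I would fix an arbitrary $t$-subset $W$ and show that $H := G[W]$ is almost surely a stable anchor. Three things must hold. First, $H$ should be asymmetric; since $t = 3\log_2 n - 2 \to \infty$, this holds with probability tending to $1$, which makes condition (i) of stability vacuous. Second, no two vertices of $V(G)\setminus W$ may share a shadow on $W$: a fixed pair collides with probability $2^{-t} = \Theta(n^{-3})$, and a union bound over the $\binom{n}{2}$ pairs gives total collision probability $O(1/n) \to 0$, yielding condition (ii). Third, $H$ must occur exactly once in $G$; by a first-moment computation the expected number of further copies is at most $n^{t}\,2^{-\binom{t}{2}} = 2^{\,t\log_2 n - \binom t2}$, and with $t \sim 3\log_2 n$ the exponent is $\sim -\tfrac32(\log_2 n)^2 \to -\infty$.

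For part (B), I would first recover $C_G(H)$ for every $t$-vertex graph $H$ directly from the multiset, using the identity $\sum_{F \in S_m(G)} C_F(H) = C_G(H)\binom{n-t}{2}$, and thereby locate the (asymmetric, unique) anchor. Since $H$ occurs exactly once, an $m$-vertex subgraph satisfies $C_F(H)=1$ if and only if it is $G[W\cup\{u,v\}]$ for a pair $u,v \notin W$; in each such $F$ the unique copy of $H$ is rigidly identified (asymmetry gives a unique isomorphism onto the fixed model of $H$), so I can read off the two shadows $s_u,s_v \subseteq V(H)$ and the bit recording whether $u\sim v$. This produces a multiset of triples $(s_u,s_v,\epsilon_{uv})$. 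Condition (ii) guarantees all shadows are distinct, so each external vertex is named by its shadow, the triples reassemble all adjacencies among external vertices, the shadows supply their adjacencies to $W$, and together with $H$ this recovers $G$. Finally, for any $G'$ with $S_m(G') = S_m(G)$ the same $H$ is its unique anchor, the same triple-multiset is produced, and — crucially — since $G$ has distinct shadows the multiset contains no diagonal triple $(s,s,\epsilon)$, which forces $G'$ to have distinct shadows as well; hence $G'$ reconstructs to the same graph and $G' \cong G$.

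The step I expect to be the main obstacle is the exact-uniqueness estimate in part (A): bounding the number of extra copies of $H$ requires summing over subsets $W'$ that overlap $W$ in $j$ vertices, where the conditioning on the edges already exposed inside $W$ must be treated for each overlap size, rather than the clean disjoint case. The clean asymptotic $n^{t}2^{-\binom t2}\to 0$ is what ultimately fixes the constant $3$ in $3\log_2 n$, so the care lies in verifying that the overlapping terms do not dominate.
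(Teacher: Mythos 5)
Your proposal follows essentially the same route as the paper: part (A) is the paper's Lemma~\ref{prob} (an arbitrary $(3\log_2 n-2)$-vertex subgraph is almost surely a stable anchor, via asymmetry, a shadow-collision bound, and a first-moment count of extra copies summed over overlap sizes), and part (B) combines the paper's Lemma~\ref{asl} (reconstruction from the anchor and its 2-adjacent subgraphs) with its generalized Kelly lemma to extract the needed multiplicities from the $3\log_2 n$-vertex multiset. Your write-up is, if anything, more careful on the points the paper treats loosely (the union bound for shadow collisions, the rigid identification of the anchor inside each $H_{uv}$, and the consistency argument for a second graph $G'$ with the same multiset), but the decomposition and key lemmas are the same.
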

Furthermore, we will see that the multiplicity of  less  than ${n^2}/{2}$ subgraphs with at most $3 \log_2{n}$ vertices is sufficient to determine almost every $n$-vertex graph.
To prove the above theorem, we need the following lemmas.
\begin{definition}
We call subgraph $H'$ is 2-adjacent to subgraph $H$, if $\vert V(H')\vert=\vert V(H) \vert+2 $ and $V(H) \subset V(H')$, i.e. $H'$ is the induced subgraph on $V(H) \cup \lbrace v, w \rbrace$, for a pair of vertices $v,w \in V(G)-V(H)$. The multiset of all 2-adjacent subgraphs to subgraph $H$ is denoted by $N_2(H)$.
\end{definition}
 Clearly,  $\vert N_2(H) \vert={{\vert V(G)-V(H) \vert }\choose{2}}$. Assuming $A_H=\lbrace H \rbrace  \cup N_2(H)$, we show that if $H$ is a stable anchor, then $\lbrace(F, C_{G}(F)) \rbrace_{F \in A_H}$ determines $G$ up to isomorphism.

\begin{lemma}\label{asl}If  $H$  is  a stable anchor of a graph $G$, then the multiplicity of subgraph $H$ and all its 2-adjacent subgraphs in graph $G$, determines graph $G$ up to  isomorphism.

In other words,  $\lbrace(F, C_{G}(F)) \rbrace_{F \in A_H}$ determines $G$ up to isomorphism.

%
%
\end{lemma}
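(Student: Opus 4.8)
The plan is to reconstruct $G$ (up to isomorphism) from the count data $\{(F,C_G(F))\}_{F\in A_H}$ by recovering, relative to the unique copy of the anchor $H$, how each vertex outside $H$ attaches to $H$ and how these outside vertices are joined to one another. I would first isolate $H$ inside the data, then show that the counts $C_G(F)$ for $F\in N_2(H)$ faithfully record the multiset of genuine $2$-adjacent subgraphs, then decode each such subgraph into a pair of shadows together with an adjacency bit, and finally assemble $G$.

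First I would identify $H$ as the unique member of $A_H$ of smallest order ($h:=|V(H)|$, while every other member has $h+2$ vertices), and read off $C_G(H)=1$ from the anchor hypothesis. The key structural observation is that, because $H$ occurs exactly once in $G$, every induced subgraph of $G$ isomorphic to a given $F\in N_2(H)$ must contain that unique copy of $H$: indeed $F$ contains $H$ as an induced subgraph (delete its two extra vertices), so any induced $F$-copy $G[U]$ contains an induced $H$-copy $G[W]$; uniqueness of the anchor forces $W=V(H)$, hence $V(H)\subseteq U$ and $G[U]$ is exactly a $2$-adjacent subgraph. Consequently $C_G(F)$ counts precisely the unordered pairs $\{v,w\}\subseteq V(G)\setminus V(H)$ with $G[V(H)\cup\{v,w\}]\cong F$, so the data recovers the multiset $N_2(H)$ with correct multiplicities.

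Next I would decode a single $2$-adjacent type. The same anchor argument, applied inside $F=G[V(H)\cup\{v,w\}]$, shows that the only induced copy of $H$ in $F$ sits on $V(H)$ (any other would be a second copy of $H$ in $G$); thus $V(H)$ is canonically recognizable inside the abstract graph $F$, the two remaining vertices are $v,w$, and I can read off the shadows $s_v,s_w$ and the bit $\mathbf{1}[vw\in E(G)]$. Here stability is essential: by condition (i) each shadow is an $\mathrm{Aut}(H)$-invariant subset of $V(H)$, hence a well-defined subset independent of how the copy of $H$ is labelled, and two distinct invariant subsets never lie in a common $\mathrm{Aut}(H)$-orbit; this makes the decoding a well-defined, injective function of the abstract type $F$ and lets me match ``the same shadow'' consistently across the different members of $N_2(H)$. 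By condition (ii) the shadows are pairwise distinct, so each is realized by at most one outside vertex and each decoded adjacency bit is unambiguous.

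Finally I would assemble $G$: assuming there are at least two vertices outside $H$ (which always holds in the application, where $h=3\log_2 n\ll n$), every shadow appears in some decoded pair, so the realized invariant subsets are in bijection with $V(G)\setminus V(H)$; I attach a vertex to $V(H)$ along each realized shadow, join two outside vertices exactly when the corresponding decoded bit is $1$, and glue this to the known graph $H$. This yields a graph isomorphic to $G$, and the construction used only the given data, proving the claim. I expect the main obstacle to be the decoding step: making rigorous that the single copy of $H$ is canonically identifiable inside each abstract $2$-adjacent type and that invariance of the shadows lets one identify shadows consistently across different pairs, since without invariance the labelling ambiguity of $\mathrm{Aut}(H)$ would prevent gluing the pairwise information into one coherent graph.
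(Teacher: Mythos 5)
Your proposal is correct and follows essentially the same route as the paper's own proof: locate the unique copy of $H$ inside each $2$-adjacent subgraph via the anchor property, use stability to identify each outside vertex by its shadow, and read off the pairwise adjacencies from the types $H_{vw}$. In fact your write-up makes explicit two points the paper leaves implicit --- that uniqueness of the anchor forces every $G$-copy of an $F\in N_2(H)$ to actually contain $V(H)$, and that $\mathrm{Aut}(H)$-invariance of the shadows is what makes the decoding of an abstract type $F$ well defined --- so it is a sound, more detailed version of the same argument.
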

\begin{proof}
 Suppose that $H$ is a stable anchor and $H_{vw}$ denotes the induced subgraph on $ V(H) \cup \lbrace v, w \rbrace$ for $v,w \in V(G)-V(H)$. We show that the set of subgraphs $A_H=  \lbrace H \rbrace \cup \lbrace H_{vw} \vert v,w \in V(G)-V(H) \rbrace $ with their multiplicity in $G$, i.e $\lbrace (F,C_{G}(F)) \rbrace_{F \in A_H}$, are sufficient to determine $G$ up to isomorphism.\\
Since $H$ is an anchor,  we find out from $\lbrace (F,C_{G}(F)) \rbrace_{F \in A_H}$ that there is just one copy of $H$ in $G$. We want to complete subgraph $H$ to obtain graph $G$. We show that with the information that $\lbrace(F, C_{G}(F)) \rbrace_{F \in A_H}$ provides,  just a unique graph up to isomorphism can be obtained. \\
All subgraphs in $ \lbrace H_{vw} \vert v,w \in V(G)-V(H) \rbrace$ have $H$ as their subgraph, while there is just one copy of $H$ in $G$.  Thus, all copies of $H$  in all $H_{vw}$ are the same.\\
According to the definition of stable anchor, all vertices which are out of the anchor  have a unique neighbors set in $H$. Thus,   any vertex $v \in V(G)-V(H)$ out of anchor $H$, is identifiable by its neighbors on $H$. Due to $H_{vw}$, we add vertices $v$ and $w$  to subgraph $H$. Consequently, for any pair of vertices $v, w \in V(G)-V(H)$, $H_{v,w}$ determines the adjacency or non-adjacency of $v$ and $w$ in $G$. Therefore, using the information that $\lbrace (F, C_{G}(F)) \rbrace_{F \in A}$ provides, subgraph $H$ can be completed to graph $G$, uniquely.
\end{proof}
According to the above lemma,  graph $G$ with $n$ vertices and with stable anchor $H$ of order $k$ can be determined with the multiplicity of  just $1+ { {n-k}\choose {2}}$ subgraphs with at most $k+2$ vertices.\\


To warm up for the next lemma, we study the following problem.
Suppose that a graph $G$ with $n$ vertices and a graph $H$ with $3 \log{n}$ vertices are given. We want to know what is the probability that $H$ is a subgraph of $G$, that is $\mathbb{P} \lbrace C_G(H) \geq 1\rbrace $.

Let $\lbrace G_1, \cdots, G_N \rbrace $ be the set of all possible subgraphs of graph $G$ with, exactly, $m$ vertices. Clearly, $N={{n}\choose{m}}$. Let $A_i$ be the event that subgraph $G_i$ is isomrphic with $H$. 
  We have,
$$\mathbb{P} \lbrace   C_G(H) \geq 1 \rbrace =\mathbb{P}\lbrace A_1  \lor A_2  \lor \cdots \lor  A_N  \rbrace \leq \sum_{1 \leq i \leq N} \mathbb{P}\lbrace A_i  \rbrace $$
In addition, $\mathbb{P}\lbrace A_i  \rbrace = \mathbb{P}\lbrace G_i \cong H \rbrace = 1/ \vert \Gamma_m\vert $. where $\Gamma_m$ is the set of all possible graphs on $m$ vertices. We know $\vert \Gamma_m \vert \approx  { 2^{m^2/2}}/{ m!}$. Therefore,
$$\mathbb{P} \lbrace   C_G(H) \geq 1 \rbrace = \frac {{{n}\choose {m}}} {\vert \Gamma_m \vert} \approx \frac {n^m}{2^{m^2/2}}$$
Substituting $m$ with $3\log{n}$, we have
$$\mathbb{P} \lbrace   C_G(H) \geq 1 \rbrace \approx n^{-1.5 \log{n}}$$
Therefore, $\lim_{n \to \infty} \mathbb{P} \lbrace   C_G(H) \geq 1 \rbrace=0$. It means that for a given graph $H$ with $m=3 \log{n}$ vertices, almost all graphs with $n$ vertices do not have any subgraph isomorphic with $H$.
\begin{lemma}\label{prob}
For almost every $n$-vertex graph $G$,  an  arbitrary subgraph with $3 \log_2{n}$ vertices is a stable anchor.


\end{lemma}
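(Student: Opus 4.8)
Fix an arbitrary $m$-element set $S\subseteq V(G)$ with $m=3\log_2 n$ and write $H=G[S]$. The plan is to show that the probability that $H$ \emph{fails} to be a stable anchor tends to $0$, by bounding the three ways failure can occur and applying a union bound. Indeed, $H$ is not a stable anchor only if at least one of the following events holds: $(B_1)$ some $m$-set $S'\neq S$ has $G[S']\cong H$, so that $H$ is not an anchor; $(B_2)$ some vertex $v\in V(G)-S$ has a shadow $s_{v,H}$ that is not invariant under $\mathrm{Aut}(H)$; or $(B_3)$ two distinct vertices $v,w\in V(G)-S$ satisfy $s_{v,H}=s_{w,H}$. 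I will argue $\mathbb{P}(B_i)=o(1)$ for each $i$.

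The events $B_2$ and $B_3$ are the easy ones. For $B_2$, observe that if $H$ is asymmetric then every subset of $V(H)$ is trivially invariant under $\mathrm{Aut}(H)=\{I\}$, so $B_2\subseteq\{H\text{ is not asymmetric}\}$. A first-moment bound over nontrivial automorphisms, whose dominant contribution comes from transpositions (a transposition $(ab)$ is an automorphism only if $a$ and $b$ have identical neighbourhoods among the other $m-2$ vertices, which has probability $2^{-(m-2)}$), gives $\mathbb{P}(B_2)\le\binom{m}{2}2^{-(m-2)}+\cdots=O(m^{2}2^{-m})=o(1)$. For $B_3$, fix $v,w\in V(G)-S$; the shadows $s_{v,H}=N(v)\cap S$ and $s_{w,H}=N(w)\cap S$ are independent uniformly random subsets of $S$, so they coincide with probability $2^{-m}$, and a union bound over the $\binom{n-m}{2}$ pairs yields $\mathbb{P}(B_3)\le\binom{n-m}{2}2^{-m}=O\!\left(n^{2}2^{-m}\right)=O(1/n)=o(1)$. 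Note that $B_3$ already forces $m>2\log_2 n$, and, as we will see, so does $B_1$; this is what dictates the choice $m=3\log_2 n$, which leaves a comfortable margin.

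The event $B_1$ is the delicate one, and I would attack it by a first moment on the number of second copies of $H$, writing $\mathbb{P}(B_1)\le \mathbb{E}\big[\#\{S'\neq S: G[S']\cong H\}\big]\le\sum_{\sigma}\mathbb{P}[\sigma\text{ is an induced isomorphism}]$, where $\sigma$ ranges over injections $S\to V(G)$ with $\sigma(S)\neq S$. For such a $\sigma$, the event that $\sigma$ is an isomorphism is an intersection of edge-equality constraints, one for each pair $\{a,b\}\subseteq S$, and its probability equals $2^{-r(\sigma)}$, where $r(\sigma)$ is the number of \emph{independent} such constraints. When $S$ and $\sigma(S)$ are disjoint, all $\binom{m}{2}$ constraints are independent, and summing over the $\binom{n}{m}m!$ such $\sigma$ reproduces exactly the warm-up estimate $\binom{n}{m}m!\,2^{-\binom{m}{2}}\approx n^{-1.5\log_2 n}=o(1)$; this is where the threshold $m>2\log_2 n$ comes from.

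The main obstacle lies in the isomorphisms that move only a few vertices, or that map part of $S$ back into $S$, where one must count $r(\sigma)$ honestly. Grouping $\sigma$ by the number $j$ of moved vertices, the correct count is $r(\sigma)\approx jm-\tfrac{j^{2}}{2}$ independent constraints against at most $\binom{m}{j}n^{j}$ choices of such $\sigma$; the two traps to avoid are (i) the spurious global factor $m!$ that appears if one naively lets every bijection of $S$ realize the isomorphism, and (ii) the loss of independence when edge-constraints close into cycles inside the overlap $S\cap\sigma(S)$, which can happen only when the images land in $S$ and is compensated by the much smaller count $m^{j}$ rather than $n^{j}$ for those maps. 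Ruling out in particular the nontrivial permutations of the shared part uses that the common induced subgraph is almost surely rigid, so that essentially only the \emph{aligned} isomorphism can witness $G[S']\cong H$. Carrying this through term by term gives $\sum_{j}(\text{term}_{j})=o(1)$, the binding requirement being $m-\tfrac{j}{2}>\log_2 n$ for $1\le j\le m$, whose tightest case $j=m$ again reads $m>2\log_2 n$ and is satisfied by $m=3\log_2 n$. Combining $\mathbb{P}(B_1),\mathbb{P}(B_2),\mathbb{P}(B_3)=o(1)$ by the union bound then shows that $H$ is a stable anchor almost surely, completing the proof.
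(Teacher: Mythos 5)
Your proposal follows essentially the same route as the paper's proof: a first-moment/union bound over potential second copies of $H$ grouped by the size of the overlap with $V(H)$ (with the same exponent $mi-i^{2}/2$ and the same underlying threshold $m>2\log_2 n$), asymmetry of $H$ to get invariance of the shadows under $\mathrm{Aut}(H)$, and a bound showing no two outside vertices share a shadow. The only differences are cosmetic: you bound shadow collisions by a direct union bound $\binom{n-m}{2}2^{-m}$ where the paper uses a birthday-type product, and you are somewhat more explicit than the paper about the independence of the edge constraints and about why only the aligned isomorphisms need to be counted.
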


\begin{proof}

Let  $m=3 \log_2{n}$ and $H$ be an arbitrary $m$-vertex subgraph of a $n$-vertex graph $G$. Since $H$ is an arbitrary graph and almost every graph is asymmetric \cite{MR0156334}, $H$ is asymmetric, almost surely.\\
Now, we want to check what is the probability of emerging another copy of $H$ in $G$, provided that there exists a copy of $H $ in $G$, that is  $$ \mathbb{P} \lbrace  C_G(H)>1\vert C_G(H) \geq 1 \rbrace$$ We show that $\lim_{n \to \infty}  \mathbb {P}\lbrace C_G(H)>1 \vert C_G(H) \geq 1 \rbrace =0$. It means that almost surely there is not any copy of $H$, except itself, and consequently, subgraph $H$ is an anchor for almost every graph $G$.\\
 Now, assuming that $H$ (with $m=3 \log{n}$ vertices) is a subgraph of $G$ (with $n$ vertices), we arrange $m$-vertex subgraphs with respect to the number of vertices which they share with $H$. For instance, there are ${{m}\choose {1}}  {{n-m}\choose {1}} $ subgraphs which  share ($m-1$) vertices with $H$. Let $F$ be a subgraph of $G$ which share ($m-1$) vertices with $F$ and $\lbrace w \rbrace=V(F)-V(H)$. The induced subgraph on ($m-1$) vertices is common in both graphs $F$ and $H$. Thus, the state of any pair of vertices in $V(H)\cap V(F)$, i.e. $(m-1)(m-2)/2$  pair of vertices, are determined by $H$. Just $m(m-1)/2- (m-1)(m-2)/2$(=$m-1$) pairs of vertices, i.e.those pairs of vertices which include $w$,  must be determined by probability. Thus, the probability that $F$ is isomorphic with $H$ is $2^{-(m-1)}$.   \\
As we mentioned earlier, almost every graph is asymmetric. Thus, we ignore any asymmetry in $H$ or its subgraphs. \\
Suppose that subgraph $F$ with $m$ vertices  share ($m-i$) vertices with subgraph $H$.  If $\vert V(F) \cap V(H)\vert= m-i$, there are $i!$  possibility for mapping of $V(H)-(V(H) \cap V(F))$ onto $ V(F)-(V(H) \cap V(F))$. We show that for each mapping $\phi$, the probability of isomorphism of $H$ onto $F$ by $\phi$  is $2^{-(m^2-(m-i)^2)/2}$.\\
 Each isomorphism mapping $\phi $ of $H$ onto $F$, determines adjacency or non-adjacency of  pair of vertices in $V(F)$. The number of pair of vertices of a graph with $m$ vertices is approximated by $m^2/2$, here. Graph $F$ with $m$ vertices has approximately $m^2/2$  pair of vertices in which $(m-i)^2/2$ of them belong to the intersection of $V(H)$ and $ V(F)$ and, consequently,  are determined by graph $H$. Thus, for each mapping $\phi$,  we have $m^2/2-(m-i)^2/2$ pair of vertices in which the adjacency or non-adjacency should be determined by probability. Thus, the probability of isomorphism of $F$ onto $H$  by mapping $\phi$ is $2^{-(m^2-(m-i)^2)/2}$ . The number of subgraphs which share ($m-i$) vertices with $H$ in $G$ is ${{m}\choose {i}}  {{n-m}\choose {i}} $. Therefore, assuming $G_1 \cong H$ we have
$$ \mathbb {P} \lbrace C_G(H)>1 \vert C_G(H) \geq 1 \rbrace= \mathbb{P}\lbrace A_2  \lor A_3  \lor \cdots \lor  A_N  \vert A_1 \rbrace$$
$$\leq \sum_{2 \leq t \leq N} \mathbb{P}\lbrace A_t  \vert A_1 \rbrace $$
$$=\sum_{i=1}^{m} {{m}\choose {i}}  {{n-m}\choose {i}}.  i! .2^{-(m^2/2-(m-i)^2/2)} $$
$$\leq \sum_{i=1}^{m} {{m}\choose {i}}  (n^i/i!).  i! .2^{-mi+i^2/2} $$
Since $i \leq m$, we have $2^{i^2/2} \leq 2^{mi/2}$. Thus, $2^{-mi+i^2/2}\leq 2^{-mi+mi/2}=2^{-mi/2}$. Therefore,
$$ \mathbb{P} \lbrace C_G(H)>1 \vert C_G(H) \geq 1 \rbrace \leq \sum_{i=1}^{m} {{m}\choose {i}}  n^i .2^{-mi/2} $$
Since $\frac{n}{2^{m/2}}(=\frac{1}{\sqrt{n}})<1$, we have
$$ \leq \sum_{i=0}^{\infty} {{m}\choose {i}}  n^i .2^{-mi/2}-1=(1+\frac {n}{2^{m/2}})^m-1 $$
We know that
$$\lim_{n \to \infty } (1+\frac {n}{2^{m/2}})^m =\lim_{n \to \infty } (1+\frac {1}{\sqrt{n}})^{3\log{n}} =1$$
Therefore,
$$ \lim_{n \to \infty } \mathbb{P} \lbrace C_G(H)>1 \vert C_G(H) \geq 1 \rbrace \leq 0$$
Thus,
$$ \lim_{n \to \infty }\mathbb{P}\lbrace C_G(H) >1 \vert C_G(H) \geq 1 \rbrace = 0$$
It means that  for almost every $G$ with $n$ vertices, if $H$ is an arbitrary  subgraph  of $G$ with $3 \log{n}$ vertices, then there is not any copy of $H$ in $G$, except itself. In other words,  for almost every graph with $n$ vertices, an arbitrary subgraph with $3\log{n}$ vertices is an anchor.

 Now, we should prove that,  almost surely,  $H$ is a stable anchor. That is, the probability that all shadows on $H$ are invariant under $Aut(H)$ and no pair of vertices in $V(G)-V(H)$ have the same shadows on $H$, tends to 1 as $n$ goes to infinity. At first, we mentioned that almost surely $H$ is asymmetric, thus, all shadows on $H$ are invariant under $Aut(H)$. It is sufficient to show that, almost surely,  there is no pair of vertices in $V(G)-V(H) $ whose shadows on $H$ are the same.  There are   $2^{3\log_2{n}}$ (= $n^3$) subsets for the set $V(H)$. Thus, the number of possible different shadows on $H$ is $n^3$. The anchor $H$ has $3 \log {n}$ vertices. Thus, there are ($n-3\log{n}$) vertices out of anchor $H$.  The probability that each vertex out of $H$ chooses a unique shadow on $H$ is $$p_n= \frac {{{n^3}\choose {n-3 \log_2{n}}}(n-3\log_2{n})!}{(n^{3})^{n-3\log_2{n}} }$$
$$\approx   n^3\cdots  (n^3-(n-3\log_2{n}-1))/ (n^{3(n-3\log_2{n})})$$ Thus, we have $p_n  \approx 1$  for large $n$. It means that, almost surely, no pair of vertices in $V(G)-V(H)$ share the same shadow on $H$. Thus, $H$ is a stable anchor.

\end{proof}

\begin{remark}\label{rem_c}
Needless to say, the lemma remains true if $3 \log_2{n}$ is replaced by $3 \log_2{n}+c$ where $c$ is a fixed integer.
\end{remark}
\begin{lemma}\label{kel}
If $H$ is a graph with at most $m$ vertices, then the multiset of $m$-vertex subgraphs of a graph $G$  determines the number of occurrence of $H$ in $G$.
\end{lemma}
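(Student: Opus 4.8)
The plan is to use a double-counting argument in the spirit of Kelly's lemma. Write $k = |V(H)| \le m$ and recall $n = |V(G)|$, which is itself recoverable from the given data since $|S_m(G)| = \binom{n}{m}$ and $m$ is fixed. The crucial observation is that every entry $F$ of the multiset $S_m(G)$ is a completely known $m$-vertex graph, so the quantity $C_F(H)$ --- the number of induced copies of $H$ inside $F$ --- can be computed directly from $S_m(G)$ without any reference to $G$ itself.

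First I would form the sum $\sum_{F \in S_m(G)} C_F(H)$, where $F$ ranges over the multiset with multiplicity. Expanding each term, this sum counts pairs $(M, K)$ where $M$ is an $m$-subset of $V(G)$, $K \subseteq M$ has $|K| = k$, and $G[K] \cong H$; here I use that an induced subgraph of an induced subgraph is itself an induced subgraph of $G$, so that counting copies of $H$ inside $G[M]$ is the same as counting $k$-subsets $K \subseteq M$ with $G[K] \cong H$.

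Next I would swap the order of summation and count the pairs $(M,K)$ by their $K$-coordinate instead. A fixed $k$-subset $K$ with $G[K] \cong H$ lies in exactly $\binom{n-k}{m-k}$ of the $m$-subsets $M$ of $V(G)$, because the remaining $m - k$ vertices of $M$ are chosen freely from the $n - k$ vertices outside $K$. Since this coefficient does not depend on the particular set $K$, the double count collapses to $\sum_{F \in S_m(G)} C_F(H) = \binom{n-k}{m-k}\, C_G(H)$.

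Finally, because $m \le n$, the binomial coefficient $\binom{n-k}{m-k}$ is a known positive integer, so I can solve for $C_G(H) = \binom{n-k}{m-k}^{-1} \sum_{F \in S_m(G)} C_F(H)$, an expression determined entirely by $S_m(G)$. The argument is essentially routine; the only point requiring care is the bookkeeping of multiplicities --- verifying that each induced copy of $H$ in $G$ is counted the same number of times across all $m$-subsets, so that the uniform coefficient $\binom{n-k}{m-k}$ factors out cleanly and the division is legitimate.
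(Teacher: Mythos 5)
Your proof is correct and follows essentially the same double-counting route as the paper: each induced copy of $H$ lies in exactly $\binom{n-k}{m-k}$ of the $m$-vertex induced subgraphs, so summing the number of copies of $H$ over the multiset and dividing by that coefficient recovers $C_G(H)$. If anything, your write-up is the more careful one --- the paper's displayed formula gives the numerator as $\sum_{F\in\Gamma_m}C_G(F)$, omitting the weight $C_F(H)$ that your sum $\sum_{F\in S_m(G)}C_F(H)$ correctly supplies.
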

In other words, the above lemma states that if $H$ is graph with at most $m$ vertices, then $C_G(H)$ can be determined by $\lbrace (F,C_G(F))\rbrace_{F \in \Gamma_m} $.
The above lemma is a generalization of Kelly's lemma \cite{MR0087949}. Kelly's lemma  states that the number of occurrence of any arbitrary proper subgraph of a graph $G$ with $n$ vertices  is inferable from the set of its subgraphs with ($n-1$) vertices.
\begin{proof}
 We want to show that the number of copies of a  $k$-vertex subgraph $H$, i.e. $C_G(H)$, such that $k<m$ can be determined by $\lbrace (F,C_G(F))\rbrace_{F \in \Gamma_m} $. The number of $m$-vertex subgraphs of $G$ which include $V(H)$ is  ${{n-k} \choose {m-k}}$. Thus, each copy of subgraph $H$ is repeated exactly in  ${{n-k} \choose {m-k}}$ $m$-vertex subgraphs. Consequently, the number of copies of  $H$ in $G$ is
$$C_G(H)=\frac{\sum_{F \in \Gamma_m}C_G(F)}{{{n-k} \choose {m-k}}} $$

\end{proof}

\textit{Proof of  Theorem \ref{t1}}:

According to Lemma \ref{prob} and Remark \ref{rem_c}, for almost every $n$-vertex graph $G$, an arbitrary subgraph $H$ with $3 \log_2{n}-2$ vertices is a stable anchor.
 Since $ H$  is a stable anchor, the multiplicity of  subgraphs $H$ and $H_{v,w}$ where $v,w \in V(G)-V(H)$ are sufficient to determine $G$, due to  Lemma \ref{asl}. The subgraph
$H$ has $3 \log_2{n}-2$ vertices and $H_{v,w}$'s have $3 \log_2{n}$ vertices. According to Lemma \ref{kel}, subgraphs with $3 \log_2{n}$ vertices are sufficient to know the number of any subgraph with smaller order. Therefore, subgraphs with $3 \log_2{n}$ vertices are sufficient to determine graph $G$ up to isomorphism.$\square$\\

\begin{corollary}\label{c}
To decide the isomorphism of a pair of  $n$-vertex graphs $G_1$ and $G_2$, almost surely, it is sufficient to compare their multiset of   subgraphs  on $3\log_2{n}$ vertices, i.e. if $ C_{G_1}(F)=C_{G_2}(F)$, for any $F \in \Gamma_{3 \log{n}}$, then $G_1 \cong G_2$.
\end{corollary}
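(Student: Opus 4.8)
The plan is to derive this directly from Theorem \ref{t1}, exploiting the fact that the isomorphism test is one-sided and that it suffices for just one of the two graphs to be generic.

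First I would dispose of the trivial direction. If $G_1 \cong G_2$, then an isomorphism carries induced subgraphs to isomorphic induced subgraphs, so $C_{G_1}(F) = C_{G_2}(F)$ for every $F \in \Gamma_{3\log_2 n}$; thus the matching of the two multisets is a necessary condition for isomorphism and requires no probabilistic input. The content of the corollary is therefore the converse: almost surely, matching multisets already force $G_1 \cong G_2$.

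For the converse I would invoke Theorem \ref{t1}. Let $\mathcal{D}_n \subseteq \Gamma_n$ denote the set of $n$-vertex graphs $G$ that are determined up to isomorphism by $\{(F, C_G(F))\}_{F \in \Gamma_{3\log_2 n}}$; Theorem \ref{t1} asserts exactly that $\mathbb{P}\{G \in \mathcal{D}_n\} \to 1$ as $n \to \infty$. Now suppose $G_1 \in \mathcal{D}_n$ and that $C_{G_1}(F) = C_{G_2}(F)$ for all $F \in \Gamma_{3\log_2 n}$. The latter says precisely that $G_1$ and $G_2$ share the same multiset of $3\log_2 n$-vertex induced subgraphs, since the data $\{(F, C_G(F))\}_{F \in \Gamma_{3\log_2 n}}$ and the multiset $S_{3\log_2 n}(G)$ carry the same information. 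Because $G_1 \in \mathcal{D}_n$ means that no $n$-vertex graph other than the isomorphs of $G_1$ can share this multiset, we conclude $G_2 \cong G_1$.

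The key observation, and where one must be slightly careful, concerns what ``almost surely'' is quantifying over. The determination property of $G_1$ furnished by Theorem \ref{t1} rules out all non-isomorphic competitors within $\Gamma_n$, not merely the random ones; hence $G_2$ may be completely arbitrary. It therefore suffices that a single one of the two graphs, say $G_1$, land in $\mathcal{D}_n$, which happens with probability tending to $1$. Consequently, with probability $\to 1$ the implication ``$C_{G_1}(F)=C_{G_2}(F)$ for all $F \in \Gamma_{3\log_2 n}$ implies $G_1 \cong G_2$'' holds, which is the assertion of the corollary. No genuinely new estimate is needed beyond Theorem \ref{t1}; the only point to spell out is that equality of the multiplicity functions is equivalent to equality of the subgraph multisets, so that the definition of $\mathcal{D}_n$ applies verbatim.
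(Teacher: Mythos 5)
Your derivation is correct and matches the paper's (implicit) reasoning: the paper states this corollary without proof as an immediate consequence of Theorem \ref{t1}, and your argument—that it suffices for one of the two graphs to lie in the almost-sure class of graphs determined by their $3\log_2 n$-vertex subgraph multisets—is exactly the intended deduction. Your remark that the determination property rules out \emph{all} competitors in $\Gamma_n$, so $G_2$ may be arbitrary, is a worthwhile clarification but does not change the substance.
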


\begin{corollary} Almost every $n$-vertex  graph can be reconstructed from subgraphs with just $3 \log_2{n}$ vertices.
\end{corollary}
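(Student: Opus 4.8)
The plan is to recognize that this corollary is a reformulation of Theorem \ref{t1} in the language of the ``$m$-deck,'' so the only real work is to make precise the translation between the two ways of packaging the subgraph data and then to invoke the theorem. First I would recall the paper's own definition: a graph $G$ on $n$ vertices is reconstructible from its $m$-vertex subgraphs exactly when the multiset $S_m(G)$ of all induced $m$-vertex subgraphs is unique among $n$-vertex graphs, i.e.\ $S_m(G)=S_m(G')$ forces $G\cong G'$.

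The key observation is that the multiset $S_m(G)$ carries exactly the same information as the family of pairs $\{(F,C_G(F))\}_{F\in\Gamma_m}$. A multiset of $m$-vertex graphs is completely specified by recording, for each isomorphism class $F\in\Gamma_m$, how many of its members lie in that class, and that count is by definition $C_G(F)$. Conversely, the data $\{(F,C_G(F))\}_{F\in\Gamma_m}$ reassembles the multiset by listing each representative $F$ with multiplicity $C_G(F)$. Hence $S_m(G)=S_m(G')$ if and only if $C_G(F)=C_{G'}(F)$ for every $F\in\Gamma_m$, so the two encodings determine one another with no loss or gain of information.

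With this identification, the corollary follows directly from Theorem \ref{t1} applied with $m=3\log_2 n$: that theorem says that for almost every $G$ the data $\{(F,C_G(F))\}_{F\in\Gamma_{3\log_2 n}}$ determine $G$ up to isomorphism. By the equivalence above, this is precisely the assertion that $S_{3\log_2 n}(G)$ is unique among $n$-vertex graphs, which is exactly what reconstructibility from $3\log_2 n$-vertex subgraphs means. Since the conclusion of Theorem \ref{t1} holds with probability tending to $1$, so does the reconstructibility claimed here.

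The only cosmetic point to dispose of is that $3\log_2 n$ need not be an integer; I would read it as $\lceil 3\log_2 n\rceil$ (or allow any fixed integer offset, which is permitted by Remark \ref{rem_c}) so that $\Gamma_m$ and $S_m(G)$ are well defined. Because everything reduces to Theorem \ref{t1}, there is no substantive obstacle: the one thing to be careful about is not to overlook that a multiset and its ``type-and-multiplicity'' encoding are the same object, so that passing between $S_m(G)$ and $\{(F,C_G(F))\}_{F\in\Gamma_m}$ neither discards nor invents information.
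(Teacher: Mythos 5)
Your proof is correct and matches the paper's (implicit) reasoning: the corollary is stated without proof precisely because it is a restatement of Theorem \ref{t1}, using the equivalence between the multiset $S_m(G)$ and the data $\{(F,C_G(F))\}_{F\in\Gamma_m}$ that you spell out. Nothing further is needed.
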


%
%

\bibliographystyle{plain}


\end{document}